\documentclass{article}
\usepackage{amsthm, amsmath, amssymb, amsfonts, mathrsfs, enumitem, tikz-cd, mathtools}

\usepackage{tikz}
\usepackage[framemethod=TikZ]{mdframed}
\usepackage[most]{tcolorbox}
\usepackage[colorlinks=true,linkcolor=black,anchorcolor=black,citecolor=black,filecolor=black,menucolor=black,runcolor=black,urlcolor=blue]{hyperref}
\usetikzlibrary{external, fit, shapes, decorations.pathreplacing, calligraphy, arrows.meta, positioning}
\usetikzlibrary{decorations.markings,arrows}

\usepackage{geometry}
\newcommand{\oo}{\infty}
\newcommand{\R}{\mathbb{R}}
\newcommand{\C}{\mathbb{C}}
\newcommand{\N}{\mathbb{N}}
\newcommand{\E}{\mathbb{E}}
\newcommand{\bin}{\text{Bin}}

\newtheoremstyle{plain}{3mm}{3mm}{\slshape}{}{\bfseries}{.}{.5em}{}
\newtheoremstyle{definition}{2mm}{2mm}{}{}{\bfseries}{.}{.5em}{}
\theoremstyle{plain} 
	
\newtheorem{theorem}{Theorem}[section]
\newtheorem{claim}{Claim}[section]

\newtheorem{question}[theorem]{Question}

\newtheorem{lemma}[theorem]{Lemma}

\theoremstyle{definition} 
\newtheorem{definition}[theorem]{Definition}
\newtheorem{remark}[theorem]{Remark}
\newtheorem{example}[theorem]{Example}

\theoremstyle{plain} 
\newcounter{MainTheoremCounter}

\newtheorem{maintheorem}[MainTheoremCounter]{Theorem}

\theoremstyle{plain}
\newtheorem*{namedtheorem}{\namedthmname}
\newcounter{namedtheorem}
	
\author{By~{\scshape Andy Liu and Michael Reilly}}
\date{\small \today}
\title{\bfseries A Composition Theorem for Binomially Weighted Averages}

\begin{document}

\maketitle



\begin{abstract}
We study binomially weighted summation methods given by 
\[
(x_n)_{n\in \N} \mapsto \left(\sum_{k=0}^n\binom{n}{k}r^k(1-r)^{n-k}x_k\right)_{n\in \mathbb{N}}
\]
for $r\in (0,1)$, and their behavior under composition with summation methods of the form 
\[
(x_n)_{n\in \N} \mapsto \left(\sum_{k=0}^n\lambda_k x_{n-k}\right)_{n\in \mathbb{N}}.
\]
Our main result shows that if the binomially weighted averages of a sequence $(x_n)_{n\in \N}$
converge to a limit then the binomially weighted averages of the sequence $\left(\sum_{k=0}^n\lambda_kx_{n-k}\right)_{n\in \mathbb{N}}$ converge to the same limit whenever $(\lambda_n)_{n\in\N}$ is an absolutely summable sequence with $\sum_{k=0}^{\infty}\lambda_k = 1$. This result disproves a theorem appearing in the literature. Additionally, we discuss applications and extensions of our main result to compositions with weighted Ces\`aro averages.

\end{abstract}

\section{Introduction}
In this paper we consider a method of summation which sends a sequence $(x_n)_{n\in \N}$ to averages of the form
\begin{equation}\label{eq:bin_avg}
\frac{1}{2^N}\sum_{n=0}^N\binom{N}{n}x_n
\end{equation}
for $N\in \N=\{0,1,\dots\}$. These averages were introduced by Euler in order to increase the rate of convergence of certain alternating series (see \cite[p. 8]{Hardy}). More generally, for $r\in (0,1)$ we define the \emph{$N$-th $r$-binomial average}\footnote{In \cite[p. 8]{Hardy} the map $(x_n)_{n\in \N}\mapsto \lim_{N\to\oo}\frac{1}{2^N}\sum_{n=0}^N\binom{N}{n}x_n$ is referred to as the \emph{Euler method} and denoted by $(E,1)$.} of the sequence $\vec{x}=(x_n)_{n\in \N}$ and we use the notation
\[
\E_{n\leq N}^{\bin(r)}(\vec{x})= \E_{n\leq N}^{\bin(r)}x_n = \sum_{n=0}^N\binom{N}{n}r^n(1-r)^{N-n}x_n.
\]
Note that $\E_{n\leq N}^{\bin(1/2)}x_n$ is equal to (\ref{eq:bin_avg}). These averages are studied in \cite{Gajser16} and in particular, a composition theorem for binomial averages and Ces\`aro averages is shown \cite[Lemma 5.8]{Gajser16}. Our goal is to show a analogous composition theorem holds when Ces\`aro averages are replaced by the averaging method introduced in \cite{natarajan2002}. We prove the following result.

\begin{maintheorem}\label{thm:main}
    Let $(\lambda_n)_{n\in \N}$ be a sequence of complex numbers such that $\sum_{n=0}^{\oo}|\lambda_n|<\oo$. Pick $r\in (0,1)$, let $L\in \C$, and let $(x_n)_{n\in \N}$ be a complex-valued sequence such that $\lim_{N\to\oo}\E_{n\leq N}^{\bin(r)}x_n= L$.
   Then
\begin{equation}\label{eq:main_thm_eq}
        \lim_{N\to\oo}\E_{n\leq N}^{\bin(r)}\left(\sum_{k=0}^n\lambda_kx_{n-k}\right) = L\cdot \left(\sum_{n=0}^{\oo}\lambda_n\right).
    \end{equation}
\end{maintheorem}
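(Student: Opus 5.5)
The plan is to rewrite the binomial average of the convolution as a weighted sum over $k$ of binomial averages of shifted copies of $(x_n)$. I then show that each shifted average converges to $L$ and is uniformly bounded, and finish with dominated convergence in $k$. Write $b_N=\E_{n\leq N}^{\bin(r)}x_n$ and $M=\sup_N|b_N|<\oo$, which is finite because $(b_N)$ converges. For $k\in\N$ let $S^k\vec{x}$ denote the shifted sequence $(S^k\vec{x})_n=x_{n-k}$, with the convention $x_m=0$ for $m<0$. Since the inner sum only runs over $k\le n\le N$, linearity gives
\[
\E_{n\leq N}^{\bin(r)}\left(\sum_{k=0}^n\lambda_kx_{n-k}\right)=\sum_{k=0}^{\oo}\lambda_k\,\E_{n\leq N}^{\bin(r)}(S^k\vec{x}),
\]
where the terms with $k>N$ vanish.

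The key step is a recursion for the shift. Write $a^{(k)}_N=\E_{n\leq N}^{\bin(r)}(S^k\vec{x})$, so $a^{(0)}_N=b_N$. Reindex $m=n-1$ and use Pascal's rule $\binom{N}{m+1}=\binom{N-1}{m}+\binom{N-1}{m+1}$. A short computation then gives, for $N\ge1$ and $k\ge1$,
\[
a^{(k)}_N=(1-r)\,a^{(k)}_{N-1}+r\,a^{(k-1)}_{N-1},\qquad a^{(k)}_0=0 .
\]
Thus $a^{(k)}$ is an exponential smoothing of $a^{(k-1)}$ with parameter $r\in(0,1)$ and zero initial value. From this recursion I will derive two facts.

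\emph{Uniform bound.} Each step is a convex combination of earlier values, and the initial value is $0$. Induction on $N$ and $k$ therefore gives $|a^{(k)}_N|\le M$ for all $N$ and $k$.

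\emph{Convergence.} If $a^{(k-1)}_N\to L$, then $a^{(k)}_N\to L$. Indeed, $a^{(k)}_N-L=(1-r)(a^{(k)}_{N-1}-L)+r(a^{(k-1)}_{N-1}-L)$. Given $\varepsilon>0$, choose $N_0$ with $|a^{(k-1)}_{N}-L|<\varepsilon$ for $N\ge N_0$. Iterating then gives $|a^{(k)}_N-L|\le(1-r)^{N-N_0}\cdot 2M'+\varepsilon$, where $M'=\max(M,|L|)$. Hence $\limsup_N|a^{(k)}_N-L|\le\varepsilon$. By induction on $k$, we get $a^{(k)}_N\to L$ for every fixed $k$.

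To conclude, the $k$-th term $\lambda_k a^{(k)}_N$ converges to $\lambda_k L$ as $N\to\oo$ and is dominated by $|\lambda_k|M$, which is summable in $k$. Dominated convergence for series (Tannery's theorem) then gives the limit $L\sum_k\lambda_k$, which is \eqref{eq:main_thm_eq}. The main obstacle I expect is finding the shift recursion cleanly, since the substitution and Pascal's rule must produce exactly the weights $r$ and $1-r$. The uniform bound, which is what makes dominated convergence work, follows directly from that recursion being convex.
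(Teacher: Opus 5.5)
Your proof is correct and follows essentially the same route as the paper. Both arguments decompose the average as $\sum_k \lambda_k\,\E_{n\leq N}^{\bin(r)}T^k\vec{x}$ and use the Pascal-rule recursion $a^{(k)}_N=(1-r)\,a^{(k)}_{N-1}+r\,a^{(k-1)}_{N-1}$ (the paper's shift lemma with $\vec{x}$ replaced by $T\vec{x}$) to get both the shift-invariance of the limit and a uniform bound, then finish with dominated convergence. The one small difference is the bound: your convex-combination induction is slightly cleaner than the paper's supremum-and-rearrange argument, since it does not need to know beforehand that each supremum is finite.
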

Notably, a theorem having the same hypothesis as Theorem \ref{thm:main} but a different value of the limit appears in \cite{Natarajan2013}.

\subsection*{Structure of the Paper}

In Section \ref{section:2} we provide a counterexample to an incorrect version of Theorem \ref{thm:main} contained in \cite{Natarajan2013} and we identify the error in the given proof. In Section \ref{section:3} we provide a proof of Theorem \ref{thm:main}. Lastly, in Section \ref{section:4} we give an application of Theorem \ref{thm:main} to the theory of weighted averaging methods.

\subsection*{Acknowledgments}
This work was done as a part of the Cycle undergraduate research program at The Ohio State University. The authors would like to thank this program for its generous non-financial support, without which this work would not have been possible.

\section{Counterexample to \cite[Theorem 2.3]{Natarajan2013}}\label{section:2}

In this section, we discuss an incorrect version of Theorem \ref{thm:main} that appears in the literature. The following theorem is given in \cite{Natarajan2013}.

\begin{claim}[{\cite[Theorem 2.3]{Natarajan2013}}]\label{thm:wrong}
Let $(\lambda_n)_{n\in \N}$ be a sequence of complex numbers such that $\sum_{n=0}^{\oo}|\lambda_n|<\oo$. Pick $r\in (0,1)$ and let $(x_n)_{n\in \N}$ be a complex-valued sequence such that $\lim_{N\to\oo}\E_{n\leq N}^{\bin(r)}x_n= L$. Then,
    \begin{equation}\label{eq:incorrect}
    \lim_{N\to\oo}\E_{n\leq N}^{\bin(r)}\left(\sum_{k=0}^n\lambda_kx_{n-k}\right) =L \cdot \left( \lambda_0 + \sum_{n=1}^\infty \lambda_n r^{n - 1}\right)
    \end{equation}
\end{claim}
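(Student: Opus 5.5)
Since the statement above is asserted in \cite{Natarajan2013} but is in fact false, the plan is not to prove it. Instead we exhibit an explicit counterexample: a summable weight sequence $(\lambda_n)_{n\in\N}$ and a sequence $(x_n)_{n\in\N}$ satisfying the hypotheses, for which the left-hand side of (\ref{eq:incorrect}) exists but differs from the right-hand side. The aim is to choose the simplest possible data, so that both sides can be computed by hand.

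\textbf{Choice of data.} Take $x_n=1$ for all $n$. By the binomial theorem,
\[
\E_{n\leq N}^{\bin(r)}x_n=\sum_{n=0}^N\binom{N}{n}r^n(1-r)^{N-n}=1 ,
\]
so the hypothesis holds with $L=1$. For the weights, a single nonzero entry in a position $m\geq 2$ suffices; we take $\lambda_2=1$ and $\lambda_n=0$ otherwise. (The choice $m=1$ would not work, since then $r^{m-1}=1$ and the claim happens to agree with the truth.) Then $(\lambda_n)_{n\in\N}$ is absolutely summable, and the transformed sequence is
\[
y_n=\sum_{k=0}^n\lambda_kx_{n-k},
\]
which equals $0$ for $n<2$ and $1$ for $n\geq 2$.

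\textbf{Computing both sides.} The left-hand side of (\ref{eq:incorrect}) is
\[
\E_{n\leq N}^{\bin(r)}y_n=1-(1-r)^N-Nr(1-r)^{N-1}.
\]
This tends to $1$ as $N\to\oo$, because $N(1-r)^{N-1}\to 0$ for $r\in(0,1)$. The right-hand side of (\ref{eq:incorrect}) is $L\cdot r^{2-1}=r$. Since $r<1$, the two values differ, so Claim \ref{thm:wrong} fails for every $r\in(0,1)$. The true value $1=\sum_n\lambda_n$ is the one predicted by Theorem \ref{thm:main}.

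\textbf{Locating the error.} The remaining, less mechanical step is to identify where the argument in \cite{Natarajan2013} breaks down. I expect this to be the main obstacle, since it requires reading their manipulation of the double sum. My guess is that the factor $r^{k-1}$ arises from a shift of index in the binomial weights: they compare $\binom{N}{n}r^n(1-r)^{N-n}$ with $\binom{N}{n-k}r^{n-k}(1-r)^{N-n+k}$ and treat the ratio as a fixed power of $r$. This ignores the binomial-coefficient ratio and the matching power of $(1-r)$, which together make the ratio tend to $1$ near the bulk of the distribution, $n\approx rN$.

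I would try to pinpoint that step in their proof and check it against the explicit example above, where everything is computable.
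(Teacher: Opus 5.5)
Your counterexample is correct and follows the paper's approach. The paper also takes $x_n\equiv 1$ with finitely supported weights ($r=1/2$ and $\lambda=(1/3,1/3,1/3,0,\dots)$, giving $1$ versus $5/6$), while your single spike $\lambda_2=1$ shows the failure for every $r\in(0,1)$, in line with the paper's remark that the true limit cannot depend on $r$. Your guess about where Natarajan's argument breaks does not match the paper's diagnosis, though: the paper locates the error in the binomial identity $\sum_{j=k}^{n-\ell}(-1)^{j-k}\binom{n}{j+\ell}\binom{j}{k}=1$, which holds only for $\ell=1$ (the correct right-hand side is $\binom{n-(k+1)}{\ell-1}$), not in a ratio of binomial weights being treated as a fixed power of $r$.
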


However, this result is incorrect. The fundamental issue is that limits of binomially weighted averages of the form $\E_{n\leq N}^{\bin(r)}$ are independent of $r$. Indeed, as shown in \cite[Theorem 4.2]{Gajser16} if $(x_n)$ is a sequence such that $\lim_{N\to\oo}\E_{n\leq N}^{\bin(r)}x_n$ converges, then for any $r'<r$ the limit $\lim_{N\to\oo}\E_{n\leq N}^{\bin(r')}x_n$ converges to the same value. Therefore, the right side of equation (\ref{eq:incorrect}) cannot depend on $r$.

To give a more concrete counterexample, let $r=1/2$, let $L=1$, let $x_n = 1$ for all $n$, and let $(\lambda_k)_{k=0}^{\oo}$ be the sequence $(1/3,1/3,1/3,0,0,\dots)$. For each $N\in \N$, we have that $\E_{n\leq N}^{\bin(1/2)}x_n = \frac{1}{2^N}\sum_{n=0}^N\binom{N}{n}\cdot 1 = 1$, thus $\lim_{N\to\oo}\E_{n\leq N}^{\bin(1/2)}x_n=L$. For $N\geq 3$, the sum $\sum_{k=0}^N\lambda_k x_{N-k}$ is 
$$
\sum_{k=0}^N\lambda_k x_{N-k}=\frac{x_N}{3}+\frac{x_{N-1}}{3}+\frac{x_{N-2}}{3}+0\cdot x_{N-3}+0\cdot x_{N-4}+\cdots =\frac{1}{3}+\frac{1}{3}+\frac{1}{3}=1.
$$
Therefore, the sequence $\left(\sum_{k=0}^N\lambda_k x_{N-k}\right)_{N\in \N}$ is equal to $(1/3,2/3,1,1,1,\dots)$. We then calculate the left side of equation (\ref{eq:incorrect})
\begin{align*}
 \lim_{N\to\oo}\E_{n\leq N}^{\bin(r)}\left(\sum_{k=0}^n\lambda_kx_{n-k}\right)  =&  \lim_{N\to\oo}\frac{1}{2^N}\sum_{n=0}^N\binom{N}{n}\left(\sum_{k=0}^n\lambda_kx_{n-k}\right)  \\=&\lim_{N\to\oo}\frac{1}{2^N}\left(\binom{N}{0}\cdot \frac{1}{3}+\binom{N}{1}\cdot \frac{2}{3}+\sum_{n=2}^N\binom{N}{n}\right)
\end{align*}
which converges to $1$. However, the right side of equation (\ref{eq:incorrect}) is equal to 
$$
L \cdot \left( \lambda_0 + \sum_{n=1}^\infty \lambda_n r^{n - 1}\right) = 1\cdot\left(\frac{1}{3}+\frac{1}{3}\cdot \left(\frac{1}{2}\right)^0+\frac{1}{3}\cdot \left(\frac{1}{2}\right)^1+0\cdot \left(\frac{1}{2}\right)^2+0\cdot \left(\frac{1}{2}\right)^3+\cdots\right) = \frac{5}{6}.
$$

Hence the two sides of (\ref{eq:incorrect}) do not agree. Therefore, Claim \ref{thm:wrong} cannot be correct and the proof in \cite{Natarajan2013} must contain an error. Indeed, the proof in \cite[pg. 192-193]{Natarajan2013} uses the following identity.
\begin{claim}\label{claim:wrong_binom_identity}
For any $n,k,\ell\in \N$ with $\ell<n$ and $0\leq k\leq n-\ell$,
\begin{equation}\tag{$\star$}\label{eq:wrong}
   \sum_{j=k}^{n-\ell}(-1)^{j-k}\binom{n}{j+\ell}\binom{j}{k} = 1.
\end{equation}
\end{claim}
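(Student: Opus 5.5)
The statement is false as written, so the plan is to disprove it rather than prove it. Because \eqref{eq:wrong} is claimed for \emph{all} admissible $n,k,\ell$, one explicit triple where the left side is not $1$ suffices. I will not look for an isolated numerical accident. Instead I will pick a one-parameter family where the sum collapses in closed form, which also shows how badly the identity fails.

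The natural family is $\ell=0$. Then the sum becomes $\sum_{j=k}^{n}(-1)^{j-k}\binom{n}{j}\binom{j}{k}$. Apply the subset-of-a-subset identity $\binom{n}{j}\binom{j}{k}=\binom{n}{k}\binom{n-k}{j-k}$ and reindex with $i=j-k$. The left side of \eqref{eq:wrong} becomes
\[
\binom{n}{k}\sum_{i=0}^{n-k}(-1)^{i}\binom{n-k}{i}=\binom{n}{k}(1-1)^{n-k}.
\]
This equals $0$ whenever $0\le k<n$. So for every $n\geq 1$ and every $0\le k<n$, with $\ell=0$, the left side of \eqref{eq:wrong} is $0$, not $1$. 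These triples satisfy the stated hypotheses $\ell<n$ and $0\le k\le n-\ell$.

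As a sanity check in the paper I would record the smallest instance, $n=1$, $k=\ell=0$. There the sum is $\binom{1}{0}-\binom{1}{1}=0$.

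I would also include one example with $\ell\geq 1$, to show the failure is not confined to the degenerate case $\ell=0$. For instance, $n=3$, $\ell=2$, $k=0$ gives $\binom{3}{2}-\binom{3}{3}=2\neq 1$.

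There is no real obstacle here; the only step needing care is the reindexing via $\binom{n}{j}\binom{j}{k}=\binom{n}{k}\binom{n-k}{j-k}$. Optionally, I would remark that the true value of the sum depends on $n$ in general. This dependence is what ultimately produced the spurious factor $\lambda_0+\sum_{n\ge1}\lambda_n r^{n-1}$ in Claim~\ref{thm:wrong}.
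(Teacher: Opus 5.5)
Your disproof is correct, but it takes a different route from the paper. The paper never exhibits a specific failing triple. Instead it evaluates the sum in closed form for every admissible $(n,k,\ell)$: it rewrites $(-1)^{j-k}\binom{j}{k}=\binom{-k-1}{j-k}$ and applies Chu--Vandermonde with the negative upper index $-k-1$. This gives $\binom{n-(k+1)}{n-\ell-k}=\binom{n-(k+1)}{\ell-1}$, the corrected identity $(\star\star)$.

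You instead specialize to $\ell=0$. There $\binom{n}{j}\binom{j}{k}=\binom{n}{k}\binom{n-k}{j-k}$ and the binomial theorem collapse the sum to $\binom{n}{k}(1-1)^{n-k}=0$ for $k<n$. You also add the spot check $(n,\ell,k)=(3,2,0)$, whose value $2$ agrees with the paper's $\binom{2}{1}$. Since $\N$ includes $0$ here, this is a complete and more elementary refutation of the claim as stated.

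Your direct computation also handles the edge case $\ell=0$, $k=n$ (where the sum is $1$) cleanly. The paper's final symmetric form turns this into $\binom{-1}{-1}$, which is $0$ under the usual convention, because the symmetry step is invalid for negative upper index.

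What the paper's general evaluation buys is diagnostic information your argument does not supply. It shows that $(\star)$ holds for all admissible $n,k$ exactly when $\ell=1$. It also gives the true $n$-dependent value that must replace $1$ in the argument of \cite{Natarajan2013}. For that reason, your optional closing remark should be phrased differently. The error comes from ignoring this dependence, i.e.\ from using $(\star)$ for $\ell\neq 1$, not from the dependence itself. And your computation alone does not establish the link to the specific factor $\lambda_0+\sum_{n\ge1}\lambda_n r^{n-1}$ in Claim~\ref{thm:wrong}.
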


In \cite[Equation (2.4)]{Natarajan2013} the special case $\ell=1$ appears and it is assumed that it holds for all $\ell$. However, (\ref{eq:wrong}) is only true when $\ell=1$. What is true is the following.
\begin{lemma}
For any $n,k,\ell\in \N$ with $\ell<n$ and $0\leq k\leq n-\ell$,
\begin{equation}\tag{$\star\star$}\label{eq:right}
        \sum_{j=k}^{n-\ell}(-1)^{j-k}\binom{n}{j+\ell}\binom{j}{k} = \binom{n-(k+1)}{\ell-1}
    \end{equation}
\end{lemma}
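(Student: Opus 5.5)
The plan is to evaluate the left-hand side of (\ref{eq:right}) by coefficient extraction, turning it into a single coefficient of a product of two power series. Write $a_j=(-1)^{j-k}\binom{j}{k}$ for $j\geq 0$, which is $0$ for $j<k$. The sum can therefore be taken over all $0\leq j\leq n-\ell$ without changing it.

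\textbf{Step 1 (generating function of $a_j$).} I will use the negative binomial series
\[
\sum_{j\geq 0}\binom{j}{k}y^j=\frac{y^k}{(1-y)^{k+1}}.
\]
Replacing $y$ by $-y$ and multiplying by $(-1)^k$ gives
\[
A(x):=\sum_{j\geq 0}a_jx^j=\frac{x^k}{(1+x)^{k+1}}
\]
as formal power series.

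\textbf{Step 2 (rewrite $\binom{n}{j+\ell}$ as a coefficient).} For $0\leq j\leq n-\ell$ we have $\binom{n}{j+\ell}=\binom{n}{n-\ell-j}=[x^{n-\ell-j}](1+x)^n$. Hence
\[
\sum_{j=0}^{n-\ell}a_j\binom{n}{j+\ell}=\sum_{j=0}^{n-\ell}[x^j]A(x)\cdot[x^{n-\ell-j}](1+x)^n=[x^{n-\ell}]\bigl(A(x)(1+x)^n\bigr).
\]
This is exactly the Cauchy product. The truncation at $j=n-\ell$ is automatic, since larger $j$ would require negative powers of $x$ from $(1+x)^n$.

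\textbf{Step 3 (simplify).} We get $A(x)(1+x)^n=x^k(1+x)^{n-k-1}$. Since $0\leq k\leq n-\ell\leq n$ and $\ell<n$, the exponent $n-k-1$ satisfies $n-k-1\geq \ell-1\geq -1$. So
\[
[x^{n-\ell}]\,x^k(1+x)^{n-k-1}=[x^{n-\ell-k}](1+x)^{n-k-1}.
\]
When $n-k-1\geq 0$, this equals $\binom{n-k-1}{n-\ell-k}=\binom{n-k-1}{\ell-1}$ by symmetry.

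The only step needing care is this last one, namely the boundary cases where $n-k-1=-1$ or $\ell=0$. Both sides must then be read with the convention $\binom{m}{-1}=0$.

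\begin{itemize}
\item If $\ell\geq 1$, then $k\leq n-1$, so $n-k-1\geq 0$ and symmetry applies.
\item If $\ell=0$ and $k\leq n-1$, the target coefficient is $\binom{n-k-1}{n-k}=0$, matching $\binom{n-k-1}{-1}=0$.
\item If $\ell=0$ and $k=n$, the series $(1+x)^{-1}$ has constant coefficient $1$. Directly, the left side is $\binom{n}{n}\binom{n}{n}=1$. This corner case would need $\binom{-1}{-1}$ interpreted as $1$, so I will note it explicitly, or restrict to $\ell\geq 1$ as intended.
\end{itemize}

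As a sanity check, setting $\ell=1$ gives $\binom{n-k-1}{0}=1$, recovering (\ref{eq:wrong}) in the one case where it holds.
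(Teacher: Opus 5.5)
Your proof is correct and is essentially the paper's argument in generating-function form. The series $x^k/(1+x)^{k+1}$ encodes the same upper-negation step $(-1)^{j-k}\binom{j}{k}=\binom{-k-1}{j-k}$, and extracting $[x^{n-\ell}]$ from $(1+x)^n(1+x)^{-(k+1)}$ is the Chu--Vandermonde convolution with $a=n$, $b=-(k+1)$ that the paper invokes, followed by the same symmetry step. The corner case you flagged, $\ell=0$, $k=n$, is a genuine gap in the statement: the left side is $1$, while $\binom{-1}{-1}=0$ under the usual convention. The paper's symmetry step $\binom{-1}{0}=\binom{-1}{-1}$ passes over it silently, so restricting to $\ell\geq 1$ or fixing the convention, as you do, is the right call.
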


\begin{proof}

    Using the fact that $
    \binom{n}{k} = (-1)^k \binom{k - n - 1}{k}$
    we have
    \[
    \sum_{j=k}^{n-\ell}\binom{n}{j+\ell}\binom{j}{k}(-1)^{j-k} = \sum_{j=k}^{n-\ell}\binom{n}{j+\ell}\binom{j - k - j - 1}{j - k} = \sum_{j=k}^{n-\ell}\binom{n}{j+\ell}\binom{-k - 1}{j - k}.
    \]
    By replacing $j$ with $j+k$, this sum becomes
    \[
    \sum_{j=k}^{n-\ell}\binom{n}{j+\ell}\binom{-k - 1}{j - k} = \sum_{j=0}^{n-\ell - k}\binom{n}{j + k +\ell}\binom{-k - 1}{j}= \sum_{j=0}^{n-\ell - k}\binom{n}{n-(j + k +\ell)}\binom{-k - 1}{j}.
    \]
    Recall the Chu-Vandermonde identity, which states that for any $r\in \N$ and any $a,b\in \C$ we have
    \begin{equation*}
    \sum_{j = 0}^r  \binom{a}{r-j} \binom{b}{j}= \binom{a + b}{r}.
    \end{equation*}
    Then taking $r=n-\ell-k$, $a=n$, $b=-(k+1)$, we can rewrite
    \[
    \sum_{j=0}^{n-\ell - k}\binom{n}{(n-k-\ell)-j}\binom{-k - 1}{j} = \binom{n - (k + 1)}{n - \ell - k}= \binom{n - (k + 1)}{n - (k + 1) - (n - \ell - k)} = \binom{n - (k + 1)}{\ell - 1}.
    \]
    In total we have
    \[
    \sum_{j=k}^{n-\ell}\binom{n}{j+\ell}\binom{j}{k}(-1)^{j-k} = \binom{n-(k+1)}{\ell-1}.
    \]
\end{proof}

We note that when $\ell=1$, equations (\ref{eq:wrong}) and (\ref{eq:right}) agree since $\binom{n-(k+1)}{1-1} = 1$.

\section{Proof of Theorem \ref{thm:main}}\label{section:3}

The goal of this section is to prove Theorem \ref{thm:main}. Our strategy is to show that binomial averages are asymptotically shift-invariant in the following sense. 
Let $T$ be the right-shift operator on sequences, defined by $T\vec{y} = (0,y_0,y_1,\dots)$ for $\vec{y} = (y_0,y_1,\dots)$. We will show that for each $k\in \N$:
\begin{equation}\label{eq:shift_inv}
\text{ If } \lim_{N\to\oo}\E_{n\leq N}^{\bin(r)}\vec{x}=L \text{ then }\lim_{N\to\oo}\E_{n\leq N}^{\bin(r)}T^k\vec{x}=L .
\end{equation}
Then we will manipulate the left-hand side of equation (\ref{eq:main_thm_eq}) so that it becomes a sum containing terms of the form $\E_{n\leq N}^{\bin(r)}T^k\vec{x}$. In order to prove (\ref{eq:shift_inv}) we will begin with an important algebraic fact about binomial averages.

\begin{lemma} 
Let $(x_n)_{n\in \N}$ be a complex-valued sequence. Then for each $r\in (0,1)$ and for each $N\geq 1$ we have that
    \begin{equation}\label{eq:shift_invariant}
        r\cdot \E_{n\leq N}^{\bin(r)}(x_{n+1})+(1-r)\cdot \E_{n\leq N}^{\bin(r)}(x_{n}) = \E_{n\leq N+1}^{\bin(r)}(x_{n})
    \end{equation}
\end{lemma}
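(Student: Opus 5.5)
The identity is a direct consequence of Pascal's rule, so I will verify it coefficient by coefficient. Write both sides as linear combinations of the terms $x_m$ for $0\le m\le N+1$ and show that the coefficients agree. Writing $s=1-r$, the right-hand side is
\[
\E_{n\leq N+1}^{\bin(r)}(x_n)=\sum_{m=0}^{N+1}\binom{N+1}{m}r^m s^{N+1-m}x_m .
\]

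On the left, the first average is $\sum_{n=0}^N\binom{N}{n}r^n s^{N-n}x_{n+1}$. I will reindex it with $m=n+1$, so that it becomes $\sum_{m=1}^{N+1}\binom{N}{m-1}r^{m-1}s^{N+1-m}x_m$. Multiplying by $r$ then gives
\[
\sum_{m=1}^{N+1}\binom{N}{m-1}r^{m}s^{N+1-m}x_m .
\]
The second term is
\[
s\sum_{m=0}^{N}\binom{N}{m}r^m s^{N-m}x_m=\sum_{m=0}^{N}\binom{N}{m}r^m s^{N+1-m}x_m .
\]

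Next I adopt the convention $\binom{N}{-1}=\binom{N}{N+1}=0$. With it, both sums run over $0\le m\le N+1$ and carry the common factor $r^m s^{N+1-m}x_m$. The coefficient of that factor on the left is therefore $\binom{N}{m-1}+\binom{N}{m}$, which equals $\binom{N+1}{m}$ by Pascal's rule. This holds uniformly in $m$, including the endpoints $m=0$ and $m=N+1$, where only one of the two terms survives. So the two sides coincide.

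There is no genuine obstacle. The only points needing care are the index shift in the first sum and the boundary terms, and the vanishing-binomial convention handles the latter. The hypothesis $N\ge1$ is not actually needed; the identity holds for $N\ge 0$ as well.
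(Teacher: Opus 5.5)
Your proof is correct and follows the paper's argument: reindex the first sum, pad both sums with vanishing boundary terms, and combine them using Pascal's rule $\binom{N}{m-1}+\binom{N}{m}=\binom{N+1}{m}$. You are also right that the hypothesis $N\geq 1$ is unnecessary; the identity holds for $N=0$ too.
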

\begin{proof}
Fix $N\geq 1$, $r\in (0,1)$ and a sequence $(x_n)_{n\in \N}$. We will start with the left-hand side of equation (\ref{eq:shift_invariant}).
    \begin{align*}
        &r\cdot \E_{n\leq N}^{\bin(r)}(x_{n+1})+(1-r)\cdot \E_{n\leq N}^{\bin(r)}(x_{n}) \\
        =& r\cdot \sum_{n=0}^{N}\binom{N}{n}r^n(1-r)^{N-n}x_{n+1}+(1-r)\cdot \sum_{n=0}^{N}\binom{N}{n}r^n(1-r)^{N-n}x_{n}
        \\
        =& \sum_{n=0}^{N}\binom{N}{n}r^{n+1}(1-r)^{N-n}x_{n+1}+\sum_{n=0}^{N}\binom{N}{n}r^{n}(1-r)^{N+1-n}x_{n}.
        \end{align*}
        We can reindex the first sum then add in zero terms corresponding to $n=0$ in the first sum and $n=N+1$ term in the second in order to combine the two sums.
        \begin{align*}
        & \sum_{n=0}^{N}\binom{N}{n}r^{n+1}(1-r)^{N-n}x_{n+1}+\sum_{n=0}^{N}\binom{N}{n}r^{n}(1-r)^{N+1-n}x_{n}\\
        =&\sum_{n=1}^{N+1}\binom{N}{n-1}r^{n}(1-r)^{N-n+1}x_n+\sum_{n=0}^{N}\binom{N}{n}r^{n}(1-r)^{N+1-n}x_{n}\\
        =&\sum_{n=0}^{N+1}\binom{N}{n-1}r^{n}(1-r)^{N-n+1}x_n+\sum_{n=0}^{N+1}\binom{N}{n}r^{n}(1-r)^{N+1-n}x_{n}\\
        =&\sum_{n=0}^{N+1}\left(\binom{N}{n-1}+\binom{N}{n}\right)r^{n}(1-r)^{N-n+1}x_n
    \end{align*}
    Pascal's identity says that $\binom{N}{n-1}+\binom{N}{n} = \binom{N+1}{n}$ for all $n$, so this becomes
    \begin{align*}
         \sum_{n=0}^{N+1}\left(\binom{N}{n-1}+\binom{N}{n}\right)r^{n}(1-r)^{N-n+1}x_n  
         =\sum_{n=0}^{N+1}\binom{N+1}{n}r^{n}(1-r)^{N-n+1}x_n
         =\E_{n\leq N+1}^{\bin(r)}(x_n).
    \end{align*}
\end{proof}

\begin{remark}
    Rearranging equation (\ref{eq:shift_invariant}) gives
    \begin{equation}
        \E_{n\leq N}^{\bin(r)}(x_{n+1}) = \E_{n\leq N}^{\bin(r)}(x_{n})+r^{-1}\cdot (\E_{n\leq N+1}^{\bin(r)}(x_{n})-\E_{n\leq N}^{\bin(r)}(x_{n})).
    \end{equation}
    This makes it clear that if $ \lim_{N\to\oo}\E_{n\leq N}^{\bin(r)}(x_{n})$ converges to a limit then $\lim_{N\to\oo}\E_{n\leq N}^{\bin(r)}(x_{n+1})$ also converges to the same limit.
\end{remark}

\begin{lemma}
    Let $(x_n)_{n\in \N}$ be a complex-valued sequence, let $r\in (0,1)$ and let $k\in \N$. Then for each $N\geq 1$ we have that
    \begin{equation}\label{eq:induction_hypothesis}
        \E_{n\leq N+k}^{\bin(r)}(x_n) = r\cdot \sum_{i=0}^{k-1}(1-r)^{k-i-1}\cdot \E^{\bin(r)}_{n\leq N+i}(x_{n+1})+(1-r)^k\cdot \E_{n\leq N}^{\bin(r)}(x_n).
    \end{equation}
\end{lemma}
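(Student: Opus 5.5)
The natural approach is induction on $k$, using the one-step identity (\ref{eq:shift_invariant}) of the previous lemma as the engine. That lemma, applied at an index $M\geq 1$, says
\[
\E_{n\leq M+1}^{\bin(r)}(x_n) = r\cdot \E_{n\leq M}^{\bin(r)}(x_{n+1})+(1-r)\cdot \E_{n\leq M}^{\bin(r)}(x_n).
\]
Iterating it peels off one shifted average at each step while multiplying the unshifted remainder by $(1-r)$. This is exactly the structure of the right-hand side of (\ref{eq:induction_hypothesis}).

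For the base case $k=0$, the sum over $i$ is empty and $(1-r)^0=1$, so both sides equal $\E_{n\leq N}^{\bin(r)}(x_n)$. As a sanity check, $k=1$ is literally (\ref{eq:shift_invariant}).

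For the inductive step, assume (\ref{eq:induction_hypothesis}) holds for some $k$ and every $N\geq 1$. Apply (\ref{eq:shift_invariant}) with $M=N+k$; this is allowed since $N+k\geq 1$. It gives
\[
\E_{n\leq N+k+1}^{\bin(r)}(x_n) = r\cdot \E_{n\leq N+k}^{\bin(r)}(x_{n+1}) + (1-r)\cdot\E_{n\leq N+k}^{\bin(r)}(x_n).
\]
Now substitute the inductive hypothesis into the last term. Multiplying by $(1-r)$ turns each coefficient $(1-r)^{k-i-1}$ into $(1-r)^{(k+1)-i-1}$, and turns $(1-r)^k$ into $(1-r)^{k+1}$. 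The leftover term $r\cdot\E_{n\leq N+k}^{\bin(r)}(x_{n+1})$ is precisely the $i=k$ summand of the new sum, since its coefficient is $r(1-r)^{(k+1)-k-1}=r$. Combining these gives (\ref{eq:induction_hypothesis}) with $k+1$ in place of $k$.

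An alternative ordering is to peel from the bottom: apply the inductive hypothesis at $N+1$ and then expand $\E_{n\leq N+1}^{\bin(r)}$ via (\ref{eq:shift_invariant}) at $N$. Peeling from the top, as above, keeps the indexing cleanest, so I will use that. There is no real obstacle here. The only care needed is the index bookkeeping: the new term must land as $i=k$ with the correct power of $(1-r)$, and the hypothesis $N\geq 1$ of the previous lemma must be respected when it is applied at index $N+k$.
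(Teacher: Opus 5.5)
Your proposal is correct and follows the paper's proof essentially verbatim: induction on $k$, applying the one-step identity at index $N+k$, substituting the inductive hypothesis into the unshifted term, and absorbing $r\cdot\E_{n\leq N+k}^{\bin(r)}(x_{n+1})$ as the $i=k$ summand. Your choice of $k=0$ as the base case is a small improvement, since the paper starts at $k=1$ and so formally leaves the trivial case $k=0\in\N$ unaddressed.
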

\begin{proof}

    We proceed by induction on $k$. When $k=1$, equation (\ref{eq:induction_hypothesis}) is the same as equation (\ref{eq:shift_invariant}). Now suppose that equation (\ref{eq:induction_hypothesis}) holds for some $k\geq 1$, and consider $\E_{n\leq N+k+1}^{\bin(r)}(x_n)$. Applying equation (\ref{eq:shift_invariant}) gives
    \begin{align*}
        \E_{n\leq N+k+1}^{\bin(r)}(x_n) = r\cdot \E_{n\leq N+k}^{\bin(r)}(x_{n+1})+(1-r)\cdot \E_{n\leq N+k}^{\bin(r)}(x_n).
    \end{align*}
We can apply the inductive hypothesis to $\E_{n\leq N+k}^{\bin(r)}(x_n)$ so that we have
 \begin{align*}
        &\E_{n\leq N+k+1}^{\bin(r)}(x_n) \\
        =& r\cdot \E_{n\leq N+k}^{\bin(r)}(x_{n+1})+(1-r)\cdot \E_{n\leq N+k}^{\bin(r)}(x_n)\\
        =& r\cdot \E_{n\leq N+k}^{\bin(r)}(x_{n+1})+(1-r)\cdot \left( r\cdot \sum_{i=0}^{k-1}(1-r)^{k-i-1}\cdot \E_{n\leq N+i}(x_{n+1})+(1-r)^k\cdot \E_{n\leq N}^{\bin(r)}(x_n)\right)\\
        =& r\cdot \E_{n\leq N+k}^{\bin(r)}(x_{n+1})+ r\cdot \sum_{i=0}^{k-1}(1-r)^{(k+1)-i-1}\cdot \E_{n\leq N+i}(x_{n+1})+(1-r)^{k+1}\cdot \E_{n\leq N}^{\bin(r)}(x_n)\\
        =&r\cdot \sum_{i=0}^{(k+1)-1}(1-r)^{(k+1)-i-1}\cdot \E_{n\leq N+i}(x_{n+1})+(1-r)^{k+1}\cdot \E_{n\leq N}^{\bin(r)}(x_n)
    \end{align*}
    which means that we are done.
\end{proof}

\begin{theorem}\label{thm:weak_shift_invariance}
Let $\vec{x}=(x_n)_{n\in \N}$ be a complex-valued sequence, let $T$ be the right-shift operator on sequences, and let $L\in \C$. If $ \lim_{N\to\oo}\E_{n\leq N}^{\bin(r)}(\vec{x})=L$ then $\lim_{N\to\oo}\E_{n\leq N}^{\bin(r)}(T\vec{x})=L$.
\end{theorem}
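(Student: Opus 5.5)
We want: if $\E_{n\le N}^{\bin(r)}x_n\to L$, then $\E_{n\le N}^{\bin(r)}(T\vec{x})_n\to L$, where $(T\vec{x})_0=0$ and $(T\vec{x})_n=x_{n-1}$ for $n\ge1$. The plan is to apply the identity (\ref{eq:shift_invariant}) to the sequence $\vec{y}=T\vec{x}$ rather than to $\vec{x}$. The key observation is that $y_{n+1}=x_n$ for every $n\ge 0$, so $\E_{n\le N}^{\bin(r)}(y_{n+1})=\E_{n\le N}^{\bin(r)}(x_n)$. Then (\ref{eq:shift_invariant}) reads
\[
\E_{n\le N+1}^{\bin(r)}(y_n)=r\,\E_{n\le N}^{\bin(r)}(x_n)+(1-r)\,\E_{n\le N}^{\bin(r)}(y_n),\qquad N\ge 1.
\]
Writing $a_N=\E_{n\le N}^{\bin(r)}(y_n)$ and $b_N=\E_{n\le N}^{\bin(r)}(x_n)$, this is the linear recursion $a_{N+1}=(1-r)a_N+r b_N$ with $b_N\to L$.

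The remaining step is a Toeplitz-type argument showing that $a_N\to L$. Subtracting $L$ gives
\[
a_{N+1}-L=(1-r)(a_N-L)+r(b_N-L).
\]
Given $\varepsilon>0$, choose $N_0$ such that $|b_N-L|<\varepsilon$ for all $N\ge N_0$. Iterating the recursion from $N_0$ (equivalently, unwinding it as in (\ref{eq:induction_hypothesis}) applied to $\vec{y}$) yields
\[
|a_{N_0+k}-L|\le (1-r)^k|a_{N_0}-L|+r\sum_{i=0}^{k-1}(1-r)^{k-i-1}\varepsilon\le (1-r)^k|a_{N_0}-L|+\varepsilon.
\]
Since $0<1-r<1$, the first term tends to $0$ as $k\to\infty$. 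Hence $\limsup_N|a_N-L|\le\varepsilon$ for every $\varepsilon>0$, and so $a_N\to L$.

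There is no serious obstacle. The one point needing care is recognizing that one should apply the lemma to $T\vec{x}$, which converts the known averages of $\vec{x}$ into the forcing term of a contractive recursion. The remark's rearrangement goes in the wrong direction for this purpose: it would require dividing by $r$ and yields the left-shift statement, not the right-shift one. Beyond that, one only needs to note that the restriction $N\ge 1$ in (\ref{eq:shift_invariant}) is harmless, since only large $N$ matter.
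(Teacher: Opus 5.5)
Your proof is correct and follows the same argument as the paper. The paper also applies the unrolled recursion (\ref{eq:induction_hypothesis}) to $T\vec{x}$, using $(T\vec{x})_{n+1}=x_n$, and shows that the geometrically weighted sum of the averages $\E_{n\le N_0+i}^{\bin(r)}(\vec{x})$ tends to $L$ while the $(1-r)^k$ term decays. Your subtracting $L$ before iterating is only slightly tidier bookkeeping, since it avoids the paper's separate tail term $r\sum_{i\ge k}(1-r)^i|L|$.
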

\begin{proof}

First note that by replacing $\vec{x}$ with $T\vec{x}$, equation (\ref{eq:induction_hypothesis}) becomes
\begin{equation}\label{eq:rec_relation_with_T}
        \E_{n\leq N+k}^{\bin(r)}(T\vec{x}) = r\cdot \sum_{i=0}^{k-1}(1-r)^{k-i-1}\cdot \E_{n\leq N+i}^{\bin(r)}(\vec{x})+(1-r)^k\cdot \E_{n\leq N}^{\bin(r)}(T\vec{x}).
\end{equation}

Pick $\epsilon>0$ and pick $N_0$ large enough so that $|\E_{n\leq N}^{\bin(r)}(\vec{x})-L|<\epsilon$ for all $N\geq N_0$. Consider the limit $\lim_{N\to\oo}\E_{n\leq N}^{\bin(r)}(T\vec{x}) = \lim_{k\to\oo}\E_{n\leq N_0+k}^{\bin(r)}(T\vec{x})$. By equation (\ref{eq:rec_relation_with_T}), this is 
\begin{align*}
    \lim_{k\to\oo}\E_{n\leq N_0+k}^{\bin(r)}(T\vec{x}) =& \lim_{k\to\oo}\left(r\cdot \sum_{i=0}^{k-1}(1-r)^{k-i-1}\cdot \E_{n\leq N_0+i}^{\bin(r)}(\vec{x})+(1-r)^k\cdot \E_{n\leq N_0}^{\bin(r)}(T\vec{x})\right).
\end{align*}
$\E_{n\leq N_0}^{\bin(r)}(T\vec{x})$ is a constant and so $\lim_{k\to\oo}(1-r)^k\cdot \E_{n\leq N_0}^{\bin(r)}(T\vec{x})=0$. Next, we will show that 
$$
\lim_{k\to\oo}r\cdot \sum_{i=0}^{k-1}(1-r)^{k-i-1}\cdot \E_{n\leq N_0+i}^{\bin(r)}(\vec{x}) =  \lim_{k\to\oo}r\cdot \sum_{i=0}^{k-1}(1-r)^{i}\cdot \E_{n\leq N_0+(k-i)}^{\bin(r)}(\vec{x})
$$
is equal to $L$.

For any $k$, we have
\begin{align}
   & \left| r\cdot \sum_{i=0}^{k-1}(1-r)^{i}\cdot \E_{n\leq N_0+(k-i)}^{\bin(r)}(\vec{x})-L\right|\\
    =& \left|  r\cdot \sum_{i=0}^{k-1}(1-r)^{i}\cdot \E_{n\leq N_0+(k-i)}^{\bin(r)}(\vec{x})-r\cdot \sum_{i=0}^{\oo}(1-r)^iL\right|\\
    \leq & r\sum_{i=0}^{k-1}(1-r)^i\left| \E_{n\leq N_0+(k-i)}^{\bin(r)}(\vec{x})-L\right|+r\sum_{i=k}^{\oo}(1-r)^i \cdot|L|\\
    <&  r\sum_{i=0}^{k-1}(1-r)^i\cdot \epsilon+o_{k\to\oo}(1)< \epsilon+o_{k\to\oo}(1).
\end{align}
Since $\epsilon$ is arbitrary, it follows that $\lim_{k\to\oo}r\cdot \sum_{i=0}^{k-1}(1-r)^{k-i-1}\cdot \E_{n\leq N_0+i}^{\bin(r)}(\vec{x}) = L$ and so $\lim_{N\to\oo}\E_{n\leq N}^{\bin(r)}T\vec{x}=L$.
    
\end{proof}

From this theorem, the implication in equation $(\ref{eq:shift_inv})$ follows by induction. So now we are ready to prove Theorem \ref{thm:main}.

\begin{proof}

Let $(\lambda_n)_{n\in \N}$ be a complex-valued sequence with $\sum_{n=0}^{\oo}|\lambda_n|<\oo$. Fix $r\in (0,1)$, $L\in \C$ and suppose that $\vec{x}=(x_n)_{n\in \N}$ is a complex-valued sequence with $\lim_{N\to\oo}\E_{n\leq N}^{\bin(r)}x_n = L$. We will show that 
\begin{equation}
    \lim_{N\to\oo}\E_{n\leq N}^{\bin(r)}\left(\sum_{k=0}^n\lambda_kx_{n-k}\right) = L.
\end{equation}

Let $T$ be the right-shift operator on sequences. Then for each $k\in \N$
\[
\E_{n\leq N}^{\bin(r)}T^k\vec{x} = \sum_{n=k}^N\binom{N}{n}r^n(1-r)^{N-n}x_{n-k}
\]
and so

\begin{align*}
    \E_{n\leq N}^{\bin(r)}\left(\sum_{k=0}^n\lambda_kx_{n-k}\right)  = \sum_{n=0}^N \binom{N}{n}r^n(1-r)^{N-n}\left(\sum_{k=0}^n\lambda_kx_{n-k}\right) &=\sum_{k=0}^N \lambda_k\left(\sum_{n=k}^N\binom{N}{n}r^n(1-r)^{N-n}x_{n-k}\right)\\
    &=\sum_{k=0}^N \lambda_k\cdot \E_{n\leq N}^{\bin(r)}T^k\vec{x}.
\end{align*}
We will show that
\begin{equation} \label{eq:dom_con}
\lim_{N\to\oo}\sum_{k=0}^N \lambda_k\cdot \E_{n\leq N}^{\bin(r)}T^k\vec{x} = \sum_{k=0}^{\oo}\lambda_k\cdot L.
\end{equation}

From Theorem \ref{thm:weak_shift_invariance}, $\lim_{N\to\oo}\E_{n\leq N}^{\bin(r)}T^k\vec{x}=L$ and so we will use the dominated convergence theorem to prove equation (\ref{eq:dom_con}). To this end, it suffices to find a constant $C$ such that $|\E_{n\leq N}^{\bin(r)}T^k\vec{x}|\leq C$ for all $k\geq 0$.

For each $k\in \N$, consider $\sup_{N\in \N}|\E_{n\leq N}^{\bin(r)}T^k\vec{x}|$. From equation (\ref{eq:rec_relation_with_T}) we have 
\[
|\E_{n\leq N+1}^{\bin(r)}(T\vec{x})|\leq r\cdot |\E_{n\leq N}^{\bin(r)}(\vec{x})|+(1-r)\cdot |\E_{n\leq N}^{\bin(r)}(T\vec{x})|
\]
and taking supremum over $N$ on both sides,
\[
\sup_{N\in \N}|\E_{n\leq N}^{\bin(r)}(T\vec{x})|\leq r\cdot\sup_{N\in \N} |\E_{n\leq N}^{\bin(r)}(\vec{x})|+(1-r)\cdot \sup_{N\in \N}|\E_{n\leq N}^{\bin(r)}(T\vec{x})|.
\]

Rearranging, we have 
\begin{equation}\label{eq:shift_and_sup}
\sup_{N\in \N}|\E_{n\leq N}^{\bin(r)}(T\vec{x})|\leq \sup_{N\in \N}|\E_{n\leq N}^{\bin(r)}(\vec{x})|.
\end{equation}

Taking $\vec{x} = T\vec{x}$ in equation (\ref{eq:shift_and_sup}), it follows by induction that $\sup_{N\in \N}|\E_{n\leq N}^{\bin(r)}(T^k\vec{x})|\leq \sup_{N\in \N}|\E_{n\leq N}^{\bin(r)}(\vec{x})|$ for each $k\in \N$. So we may take $C =\sup_{N\in \N}|\E_{n\leq N}^{\bin(r)}(\vec{x})|$. Then equation (\ref{eq:dom_con}) holds by the dominated convergence theorem, and so we are done.

\end{proof}

\section{Weighted Averaging Methods}\label{section:4}

In this section, we will investigate some consequences of Theorem \ref{thm:main}. To begin, we consider a special case of averages of the form $\sum_{k=0}^n\lambda_kx_{n-k}$.

\begin{definition}
 Let $W:\N\rightarrow \R$ be a function which increases to $\infty$ and let $(x_n)_{n\in \N}$ be a bounded sequence. For $N\in \N$, define the \emph{N-th $W$-weighted average} of $(x_n)_{n\in \N}$ by
\begin{equation}
    \E_{n\leq N}^Wx_n= \frac{1}{W(N)}\sum_{n=1}^N\Delta W(n) x_n,
\end{equation}
    where $\Delta W(n) = W(n)-W(n-1)$ for all $n$.
\end{definition}

\begin{lemma}\label{lem:weighted_is_lambda}
  Let $W:\N\rightarrow \R$ be a function which increases to $\infty$ and let $(x_n)_{n\in \N}$ be a bounded sequence. Suppose that $L=\lim_{N\to\oo}\frac{W(N-1)}{W(N)}$ exists with $L\in (0,1)$. For each $n\geq 0$, let $\lambda_n = L^{n}-L^{n+1}$. Then
    \begin{equation}\label{eq:weighted_is_lambda}
    \frac{1}{W(N)}\sum_{n=1}^N\Delta W(n) x_n = \sum_{k=0}^N \lambda_k x_{N-k}+o_{N\to\oo}(1).
    \end{equation}
\end{lemma}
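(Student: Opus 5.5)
We compare the two sides coefficientwise after reindexing. Write the left-hand side of (\ref{eq:weighted_is_lambda}) as $\sum_{k=0}^{N-1} \frac{\Delta W(N-k)}{W(N)} x_{N-k}$, and the right-hand side as $\sum_{k=0}^N \lambda_k x_{N-k}$. Since $(x_n)$ is bounded, say $|x_n|\le M$, the difference of the two sides is at most
\[
M\left(\sum_{k=0}^{N-1}\left|\frac{\Delta W(N-k)}{W(N)}-\lambda_k\right| + |\lambda_N|\right),
\]
so it suffices to show this $\ell^1$-type quantity tends to $0$. The term $|\lambda_N| = L^N(1-L)\to 0$ trivially.

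The key computation is the individual coefficients. Write $\rho_m = W(m-1)/W(m)$, so $\rho_m\to L$. Then
\[
\frac{\Delta W(N-k)}{W(N)} = \frac{W(N-k)}{W(N)}\bigl(1-\rho_{N-k}\bigr) = \Bigl(\prod_{j=0}^{k-1}\rho_{N-j}\Bigr)(1-\rho_{N-k}).
\]
For each fixed $k$, as $N\to\infty$ this tends to $L^k(1-L)=\lambda_k$. So pointwise convergence of coefficients is immediate; the work is upgrading it to convergence of the sum uniformly in $N$.

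The main obstacle is controlling the tail $k\ge K$ uniformly in $N$, and I would handle it by splitting at a fixed cutoff $K$. Given $\epsilon>0$, choose $K$ with $\sum_{k\ge K}\lambda_k = L^K<\epsilon$. For the head $k<K$ there are finitely many terms, each tending to $0$, so their total is $<\epsilon$ for large $N$. For the tail, note that both coefficient families are nonnegative (since $W$ is increasing) and they telescope: $\sum_{k=K}^{N-1}\frac{\Delta W(N-k)}{W(N)} = \frac{W(N-K)-W(0)}{W(N)}\le \frac{W(N-K)}{W(N)}$ (assuming, as I will, that $W\ge 0$, or more generally that $|W(0)|/W(N)\to 0$, which holds because $W\to\infty$). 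Now $\frac{W(N-K)}{W(N)}=\prod_{j=0}^{K-1}\rho_{N-j}\to L^K<\epsilon$. Hence the tail contributes at most $\frac{W(N-K)+|W(0)|}{W(N)}+L^K<3\epsilon$ for $N$ large, and the whole difference is $O(M\epsilon)$, which gives (\ref{eq:weighted_is_lambda}). The only subtlety to check in writing it out is the sign/normalisation of $W(0)$; the telescoping bound makes this harmless since $W(N)\to\infty$.
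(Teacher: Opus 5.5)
Your proof is correct and is essentially the paper's argument. The paper likewise splits at a cutoff $M$ into three pieces: its term $B$ treats the finitely many head coefficients using $\Delta W(N-k)/W(N)\to L^k-L^{k+1}$; its term $A$ bounds the tail of the left-hand weights by the telescoped ratio $W(N-M-1)/W(N)\to L^{M+1}$; and its term $C$ bounds the tail of the $\lambda_k$ by a geometric tail. Your explicit handling of the $W(0)$ term in the telescoping sum is slightly more careful than the paper, which silently drops it.
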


\begin{proof}

Note that 
        \[
        L =\lim_{N\to\oo}\frac{W(N - 1)}{W(N)} =\lim_{N\to\oo} \frac{W(N - 2)}{W(N - 1)} =  \dots = \lim_{N\to\oo}\frac{W(N - m)}{W(N - m + 1)}.
        \]
        Hence,
        \begin{equation}\label{eq:L_eq}
        \lim_{N\to\oo}\frac{W(N-k)}{W(N)} = \lim_{N\to\oo}\frac{W(N - k)}{W(N - k + 1)} \cdot \frac{W(N - k + 1)}{W(N- k + 2)}\cdots \frac{W(N -2)}{W(N-1)} \cdot \frac{W(N -1)}{W(N)}= L^k 
        \end{equation}

   Let $\epsilon>0$. To show that (\ref{eq:weighted_is_lambda}) holds, we will pick $M\in \N$ and apply the triangle inequality,
\begin{align}\label{eq:triangle_inequality}
        \left| \frac{1}{W(N)} \sum_{n = 1}^N \Delta W(n) x_n - \sum_{k = 0}^N \lambda_kx_{N - k} \right| \leq A(N,M)+B(N,M)+C(N,M)
\end{align}
where 
\begin{align*}
    A(N,M) =& \left| \frac{1}{W(N)} \sum_{n = 1}^N \Delta W(n) x_n - \frac{1}{W(N)} \sum_{n = N -M}^N \Delta W(n) x_n \right|\\
    B(N,M) =& \left| \frac{1}{W(N)} \sum_{n = N - M}^N \Delta W(n) x_n - \sum_{k = 0}^{M} \lambda_kx_{N - k} \right|\\
    C(N,M) =& \left|\sum_{k = 1}^{M} \lambda_kx_{N - k} - \sum_{k = 1}^{N} \lambda_kx_{N - k} \right|.
\end{align*}
    We will show that when $M$ is sufficiently large, $A(N,M) < \epsilon/2 + o_{N\to\oo}(1)$, $B(N,M) = o_{N\to\oo}(1)$, and $C(N,M)<\epsilon/2+o_{N\to\oo}(1)$, so that the right hand side of (\ref{eq:triangle_inequality}) is bounded by $\epsilon+o_{N\to\oo}(1)$.

First consider $A(N,M)$.
        \begin{align*}
           A(N,M) =& \left|\frac{1}{W(N)}\sum_{n=1}^N\Delta W(n)x_n - \frac{1}{W(N)}\sum_{n=N-M}^N\Delta W(n)x_n\right| = \left|\frac{1}{W(N)}\sum_{n=1}^{N-M-1}\Delta W(n)x_n \right|\\
            \leq & \sup_{n\in \N}|x_n|\cdot \frac{1}{W(N)}\sum_{n=1}^{N-M-1}\Delta W(n)= \sup_{n\in \N}|x_n|\cdot \frac{W(N-M-1)}{W(N)} = \sup_{n\in \N}|x_n|\cdot L^{M+1}+o_{N\to\oo}(1).
        \end{align*}
        $L<1$ by assumption and so for sufficiently large $M$, $\sup_{n\in \N}|x_n|\cdot L^{M+1} <\epsilon/2$. In order to bound $B(N,M)$, we can rewrite $\frac{1}{W(N)} \sum_{n = N - M}^N \Delta W(n) x_n$ and apply equation (\ref{eq:L_eq}) to obtain
        \begin{align*}
 \frac{1}{W(N)} \sum_{n = N - M}^N \Delta W(n) x_n = \sum_{k = 0}^{M } \frac{\Delta W(N - k)}{W(N)}x_{N-k} 
        =&\sum_{k = 0}^{M } (\lambda_k+o_{N\to\oo}(1))x_{N-k}= \sum_{k = 0}^{M } \lambda_k x_{N-k}  +o_{N\to\oo}(1),
        \end{align*}
        which means that $B(N,M) = o_{N\to\oo}(1)$.

Lastly, we will bound $C(N,M)$. Note that 
\begin{align*}
    &\left|\sum_{k = 1}^{M} \lambda_kx_{N - k} - \sum_{k = 1}^{N} \lambda_kx_{N - k}\right| = \left|\sum_{k = M+1}^{N} \lambda_kx_{N - k}\right|\\
    = &\left|\sum_{k = M+1}^{N} \left(L^k-L^{k+1}\right)x_{N - k} \right|=  \left|L^{M+1}\sum_{k = 0}^{N-M-1} \left(L^k-L^{k+1}\right)x_{k}\right| \\
    =&\left|L^{M+1}(x_0-L^{N-M}x_{N-M-1})\right|\leq \sup_{n\in \N}|x_n|\cdot L^{M+1}+\sup_{n\in \N}|x_n|\cdot L^{N-M}.
\end{align*}
       When $M$ is large enough, $\sup_{n\in \N}|x_n|\cdot L^{M+1}<\epsilon/2$, and for any fixed $M$ we have $\sup_{n\in \N}|x_n|\cdot L^{N-M} = o_{N\to\oo}(1)$. Thus we have shown that $C(N,M)<\epsilon/2+o_{N\to\oo}(1)$. This concludes the proof.
\end{proof}

\begin{example}
Fix any bounded sequence $(x_n)_{n\in \N}$. Let $W(N) = a^N$ for $a>1$. Then $\lim_{N\to\oo}\frac{W(N-1)}{W(N)} = \frac{1}{a}>0$, and taking $\lambda_{k} = \frac{\Delta W(N-k)}{W(N)} = \frac{1}{a^k}-\frac{1}{a^{k+1}}$ we have the exact equality
\begin{equation}
\frac{1}{W(N)}\sum_{n=1}^N\Delta W(n)x_n = \sum_{k=0}^N \lambda_k x_{N-k}.
\end{equation}

Now take $W(N) = e^N$ so that $\Delta W(n) = e^n-e^{n-1}$ and $\lim_{N\to\oo}\frac{W(N-1)}{W(N)} = \frac{1}{e}$. Taking $\lambda_n = \frac{1}{e^n}-\frac{1}{e^{n+1}}$, Lemma \ref{lem:weighted_is_lambda} says that 
\begin{equation}
  \frac{1}{e^N}\sum_{n=1}^N\left(e^n-e^{n-1}\right)\cdot x_n = \sum_{k=0}^N \left(\frac{1}{e^k}-\frac{1}{e^{k+1}}\right) x_{N-k}+o_{N\to\oo}(1).
\end{equation}

\end{example}

Lemma \ref{lem:weighted_is_lambda} gives a way to reformulate a special case of our main theorem.
\begin{theorem}
    Let $r\in (0,1)$, $L\in \C$ and let $W$ be an increasing function with $\lim_{N\to\oo}W(N)=\oo$, such that $\lim_{N\to\oo}\frac{W(N-1)}{W(N)}$ exists and is positive. Suppose that $(x_n)_{n\in \N}$ is a bounded sequence such that $\lim_{N\to\oo}\E_{n\leq N}^{\bin(r)}x_n = L$. Then 
    \begin{equation}\label{eq:main_thm_W}
        \lim_{N\to\oo}\E_{n\leq N}^{\bin(r)}\E_{k\leq n}^Wx_k = L.
    \end{equation}
\end{theorem}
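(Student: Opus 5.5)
Write $\ell=\lim_{N\to\oo}\frac{W(N-1)}{W(N)}$ to avoid a clash with the limit $L$ of the statement. The plan is to reduce the statement to Theorem \ref{thm:main} via Lemma \ref{lem:weighted_is_lambda}. Assume first that $\ell\in(0,1)$, which is the range where Lemma \ref{lem:weighted_is_lambda} applies. Put $\lambda_k=\ell^k-\ell^{k+1}$ and set $y_n=\E^W_{k\leq n}x_k$. Lemma \ref{lem:weighted_is_lambda} gives
\[
y_n=\sum_{k=0}^n\lambda_kx_{n-k}+e_n,\qquad e_n\to 0 .
\]
Here $\lambda_k\geq 0$, so $\sum_k|\lambda_k|=\sum_k\lambda_k=1-\lim_{K\to\oo}\ell^{K+1}=1$ by telescoping.

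Next, split the binomial average linearly:
\[
\E^{\bin(r)}_{n\leq N}y_n=\E^{\bin(r)}_{n\leq N}\Big(\sum_{k=0}^n\lambda_kx_{n-k}\Big)+\E^{\bin(r)}_{n\leq N}e_n .
\]
By Theorem \ref{thm:main}, the first term converges to $L\cdot\sum_k\lambda_k=L$. For the second term I use regularity of binomial averages: a sequence tending to $0$ has binomial averages tending to $0$. To see this, given $\epsilon>0$ choose $n_0$ with $|e_n|<\epsilon$ for $n\geq n_0$. Then
\[
\big|\E^{\bin(r)}_{n\leq N}e_n\big|\leq \epsilon+\sup_n|e_n|\sum_{n<n_0}\binom{N}{n}r^n(1-r)^{N-n}.
\]
The sum on the right is at most $n_0N^{n_0}(1-r)^{N-n_0}\to0$ for fixed $n_0$. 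Note that $e_n$ is bounded because $x$ is bounded, so $\sup_n|e_n|<\oo$. Hence the second term tends to $0$, which proves \eqref{eq:main_thm_W} when $\ell<1$.

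The main obstacle is the case $\ell=1$, which the hypothesis ``exists and is positive'' allows; an example is $W(N)=N$, the Ces\`aro case. Lemma \ref{lem:weighted_is_lambda} gives no kernel there, since formally every $\lambda_k$ would be $0$. For this case I would first try to reduce to the known Ces\`aro composition result \cite[Lemma 5.8]{Gajser16}. Failing that, I would try a direct argument: exchange the order of summation to write $\E^{\bin(r)}_{n\leq N}y_n=\sum_m c_{N,m}x_m$, then compare $c_{N,m}$ with binomial weights of a shifted or averaged form. The aim would be to express the result as an average, with weights concentrating as $N\to\oo$, of the quantities $\E^{\bin(r)}_{n\leq M}T^k\vec x$. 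These quantities are uniformly bounded by \eqref{eq:shift_and_sup} and tend to $L$ by Theorem \ref{thm:weak_shift_invariance}. I expect this step to need the most care. If it cannot be made to work, the statement should be restricted to $\ell\in(0,1)$, which is what Lemma \ref{lem:weighted_is_lambda} supports.
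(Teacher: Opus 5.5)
For $\ell\in(0,1)$ your argument is the paper's. The paper's only justification for this theorem is the remark that Lemma~\ref{lem:weighted_is_lambda} rewrites $\E^W_{k\leq n}x_k$ as $\sum_{k=0}^n\lambda_kx_{n-k}+o(1)$ with $\sum_k\lambda_k=1$, so that Theorem~\ref{thm:main} applies. Your estimate showing that binomial averages of a null sequence tend to $0$ supplies the step the paper leaves tacit.

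The case $\ell=1$ is a genuine gap, but neither route you suggest can close it, because the statement is false there. Take $S=\{2^j: j\geq 1\}$, $x_n=\mathbf{1}_S(n)$, and $W(N)=2+\#(S\cap[1,N])-2^{-N}$. Then $\Delta W(n)=\mathbf{1}_S(n)+2^{-n}>0$, so $W$ is strictly increasing to $\infty$, and $W(N-1)/W(N)=1-\Delta W(N)/W(N)\to 1$. Since $x$ vanishes off $S$,
\[
\E^W_{k\leq n}x_k=\frac{\#(S\cap[1,n])+\sum_{s\in S,\,s\leq n}2^{-s}}{\#(S\cap[1,n])+2-2^{-n}}\longrightarrow 1,
\]
so $\E^{\bin(r)}_{n\leq N}\E^W_{k\leq n}x_k\to 1$ by regularity. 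On the other hand, $\E^{\bin(r)}_{n\leq N}x_n=\Pr(\mathrm{Bin}(N,r)\in S)\to 0$. Indeed, by Chebyshev, $\mathrm{Bin}(N,r)$ lies in $[rN-N^{3/5},rN+N^{3/5}]$ with probability $1-O(N^{-1/5})$. For large $N$ this interval contains at most one power of $2$, and every point mass of $\mathrm{Bin}(N,r)$ is $O(N^{-1/2})$. So $L=0$ while the composed limit is $1$.

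The mechanism is that when $\ell=1$ the weights $\Delta W(n-k)/W(n)$ tend to $0$ for each fixed lag $k$. No summable kernel $(\lambda_k)$ then captures $\E^W$, and the $W$-average can sample $x$ on a set that binomial averages do not see. This is exactly where your plan breaks down: you cannot write the composition as a concentrating average of the quantities $\E^{\bin(r)}_{n\leq M}T^k\vec{x}$.

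The paper's own justification has the same limitation, since Lemma~\ref{lem:weighted_is_lambda} assumes the ratio limit lies in $(0,1)$. Your fallback is therefore the correct statement: the hypothesis ``positive'' should read ``in $(0,1)$''. The case $W(N)=N$ is covered separately by Gajser's result (Theorem~\ref{thm:Gajser}), not by this theorem. The example also shows that some regularity of $W$ beyond $W(N-1)/W(N)\to 1$ is needed, which bears on the paper's closing question.
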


We can compare equation (\ref{eq:main_thm_W}) with the following result in \cite{Gajser16}.

\begin{theorem}[{\cite[Lemma 5.8]{Gajser16}}]\label{thm:Gajser}
        Let $r\in (0,1)$ and let $W(N) = N$. Suppose that $(x_n)_{n\in \N}$ is a bounded sequence such that $\lim_{N\to\oo}\E_{n\leq N}^{\bin(r)}x_n = L$. Then 
    \begin{equation}\label{eq:Gasjer_eq}
        \lim_{N\to\oo}\E_{n\leq N}^{\bin(r)}\E^W_{m\leq n}x_m=L.
    \end{equation}
\end{theorem}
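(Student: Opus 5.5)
We prove the statement with $W(N)=N$ (Cesàro averages, $y_n=\frac1n\sum_{m=1}^n x_m$). Lemma \ref{lem:weighted_is_lambda} does not apply here, since $W(N-1)/W(N)\to 1$. Theorem \ref{thm:main} also cannot be used directly, since the Cesàro weights $1/n$ are not of the form $\lambda_{n-m}$ for a fixed summable sequence. The plan is therefore to work directly with the shift identity (\ref{eq:shift_invariant}). That identity lets us compare the binomial average of $y$ with the binomial averages of $x$ and of its shifts.

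\textbf{Step 1: a discrete relation between $y$ and $x$.} From $n y_n=(n-1)y_{n-1}+x_n$ we get
\[
y_n - y_{n-1} = \tfrac{1}{n}(x_n-y_{n-1}).
\]
We then set $z_n=y_n-L$. Write $b_N=\E^{\bin(r)}_{n\le N}x_n - L$, which tends to $0$ by hypothesis. Let $\beta_N=\E^{\bin(r)}_{n\le N}z_n$. The goal is $\beta_N\to0$.

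\textbf{Step 2: a recursion for $\beta_N$.} Applying (\ref{eq:shift_invariant}) to $z$ gives
\[
\beta_{N+1}-\beta_N = r\,\E^{\bin(r)}_{n\le N}(z_{n+1}-z_n).
\]
By Step 1, $z_{n+1}-z_n=\frac{1}{n+1}\big((x_{n+1}-L)-z_n\big)$. The binomial measure is concentrated on $n=rN+O(\sqrt N)$, and $x$ and $z$ are bounded, so we may replace $\frac1{n+1}$ by $\frac1{rN}$ at a cost of $O(N^{-3/2})$. This yields
\[
\beta_{N+1}-\beta_N=\tfrac{1}{N}\Big(\E^{\bin(r)}_{n\le N}(x_{n+1}-L) - \beta_N\Big)+O(N^{-3/2}).
\]
By the Remark after (\ref{eq:shift_invariant}), $\E^{\bin(r)}_{n\le N}(x_{n+1}-L)\to 0$, so the recursion reads
\[
\beta_{N+1}=\big(1-\tfrac1N\big)\beta_N+\tfrac{c_N}{N}+O(N^{-3/2}), \qquad c_N\to0 .
\]

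\textbf{Step 3: solve the recursion.} Unrolling the recursion gives
\[
\beta_N\approx\frac1N\Big(N_0\beta_{N_0}+\sum_{N_0\le m<N} c_m\Big)+\frac1N\sum_{m<N} m\cdot O(m^{-3/2}).
\]
This is a Cesàro mean of the null sequence $c_m$, plus $O(N^{-1/2})$, plus a term of size $O(1/N)$. Hence $\beta_N\to0$. If the variant with $y_0$ defined differently matters, the finitely many initial terms only contribute $o(1)$, since $\E^{\bin(r)}_{n\le N}$ of any finitely supported sequence tends to $0$.

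\textbf{Main obstacle.} The delicate step is the replacement of $1/(n+1)$ by $1/(rN)$ in Step 2. It needs a Chernoff/variance estimate: $\sum_n \binom Nn r^n(1-r)^{N-n}\,|1/(n+1)-1/(rN)|=O(N^{-3/2})$. This follows from $\E|n-rN|=O(\sqrt N)$, together with the exponentially small mass where $n<rN/2$. The estimate must be accurate enough that the error sums to $o(N)$ after multiplying by $m$, and $O(m^{-1/2})$ summed gives $O(\sqrt N)$, which suffices. Boundedness of $x$ (hence of $y$) is used exactly here.
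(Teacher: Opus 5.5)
Your proof is correct. The paper gives no proof of this statement: it is quoted from \cite[Lemma 5.8]{Gajser16} only for comparison with the $W$-weighted theorem. So your argument is an independent, self-contained proof, built from the paper's own Section~\ref{section:3} tools. The useful contrast is with Theorem~\ref{thm:weak_shift_invariance} and Theorem~\ref{thm:main}. There, (\ref{eq:shift_invariant}) is applied to $T\vec{x}$ and gives a recursion with the \emph{constant} contraction factor $1-r$, whose unrolling is a geometric average of the $\E^{\bin(r)}_{n\le N+i}(\vec{x})$. General kernels are then handled by dominated convergence over the shifts $T^k$. That route needs no probabilistic input and no boundedness of $(x_n)$, but it is tied to convolution kernels and, as you note, cannot reach Ces\`aro weights. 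You instead apply (\ref{eq:shift_invariant}) to the Ces\`aro means themselves and handle the $n$-dependent weight $\frac{1}{n+1}$ with a concentration estimate that freezes it at $\frac{1}{rN}$. This produces the \emph{variable} contraction $1-\frac1N$, so unrolling gives a Ces\`aro mean of $(c_m)$ rather than a geometric one. What your approach buys is the ability to handle weights not of the form $\lambda_{n-m}$. The cost is that boundedness is genuinely used, though only to make the error term $e_N$ (your $O(N^{-3/2})$) satisfy $Ne_N\to 0$. Your $O(N^{-3/2})$ is more than enough for that. The exponential (or at least fourth-moment) tail bound on $\{n<rN/2\}$ that you invoke is really needed, since a Chebyshev bound there only gives $O(1/N)$. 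Step~3 can also be made exact. With $\gamma_N=(N-1)\beta_N$ your recursion reads $\gamma_{N+1}-\gamma_N=c_N+Ne_N$ for $N\ge 1$, so $(N-1)\beta_N=\sum_{m=1}^{N-1}(c_m+me_m)$ with no $N_0$ or ``$\approx$'' needed. Finally, the same scheme with $\frac1n$ replaced by $\frac{\Delta W(n)}{W(n)}$ suggests a plausible sufficient condition for the paper's closing Question. That condition would be that $\Delta W(n)/W(n)$ be nearly constant, in relative terms, on windows $|n-rN|=O(\sqrt N)$.
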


This suggests the following question:

\begin{question}
    Let $r\in (0,1)$, $L\in \C$ and let $(x_n)_{n\in \N}$ be a bounded sequence such that $\lim_{N\to\oo}\E_{n\leq N}^{\bin(r)}x_n = L$.   
    Which functions $W$ satisfy 
    \[
    \lim_{N\to\oo}\E_{n\leq N}^{\bin(r)}\E^W_{m\leq n}x_m=L?
    \]
    Does this statement hold for every function $W$ which increases to $\oo$? If not, is there a regularity condition on $W$ which is sufficient for the statement to hold?
\end{question}

\bibliographystyle{unsrt}
\bibliography{references}

@article{Natarajan2013,
url = {https://doi.org/10.1524/anly.2013.1193},
title = {A Product Theorem for the Euler and the Natarajan Methods of Summability},
author = {P. Natarajan},
pages = {189--196},
volume = {33},
number = {2},
journal = {Analysis},
doi = {doi:10.1524/anly.2013.1193},
year = {2013},
lastchecked = {2025-04-10}
}

@article{Gajser16,
author = {D. Gajser},
year = {2016},
month = {04},
pages = {393-410},
title = {On convergence of binomial means, and an application to finite Markov chains},
volume = {10},
journal = {Ars Mathematica Contemporanea},
doi = {10.26493/1855-3974.705.56d}
}

@book{Hardy,
    author = {G. Hardy},
    title = {Divergent Series},
    publisher = {The Clarendon Press},
    year = {1949}
}

@article{natarajan2002,
  title={Some properties of the $Y$-method of summability in complete ultrametric fields},
  author={Natarajan, P.},
  journal={Annales math{\'e}matiques Blaise Pascal},
  volume={9},
  number={1},
  pages={79--84},
  year={2002}
}

\end{document}